\documentclass[11pt,reqno]{article}
\usepackage{amssymb, amsmath, amsthm, amsfonts, amscd, epsfig, subcaption}
\usepackage[dvipsnames]{xcolor}
\usepackage[utf8]{inputenc}
\usepackage[english]{babel}
\usepackage{bm}
\usepackage{bbm}
\usepackage{graphicx, epsfig}
\usepackage{geometry,graphicx,pgfplots}

\usepackage[export]{adjustbox}
\usepackage[titletoc,toc,title]{appendix}

\usepackage{algorithm}
\usepackage{mathtools}

\usepackage{afterpage}
\usepackage{comment}
\usepackage{cases}

\newtheorem{theorem}{Theorem}[section]

\newtheorem{lemma}[theorem]{Lemma}                                                                                                                                                                                                                                                                             

\newtheorem{definition}{Definition}

\newtheorem{example}{Example}

\theoremstyle{definition}

\setlength{\textwidth}{160mm} \setlength{\textheight}{220mm}
\setlength{\oddsidemargin}{0mm} \setlength{\evensidemargin}{0mm} \setlength{\topmargin}{-10mm}

\tolerance=9999
\emergencystretch=10pt
\hyphenpenalty=10000

\def\RR{\mathbb{R}}

\def\CC{\mathbb{C}}

\def\bu{\mathbf{u}}

\def\:={\coloneqq}

\def\bvp{{\bm{\varphi}}}

\def\H{\mathbf{H}}
\def\p{\partial}

\def\<{\left\langle}
\def\>{\right\rangle}

\def\rmi {\mathrm{i}}


\newcommand{\Om}{\Omega}

\newcommand{\eqnref}[1]{(\ref {#1})}

\def\beq{\begin{equation}}
\def\eeq{\end{equation}}

\newcommand{\ds}{\displaystyle}

\newcommand{\bmat}[1]{\begin{bmatrix}
#1
\end{bmatrix}}

\renewcommand{\Re}{\mbox{Re}}
\renewcommand{\Im}{\mbox{Im}}

\numberwithin{equation}{section}
\numberwithin{figure}{section}

\begin{document}
\title{Analytic asymptotic formulas for effective parameters of planar elastic composites \thanks{\footnotesize This study was supported by the National Research Foundation of Korea (NRF) grant funded by the Korean government (MSIT) (NRF-2021R1A2C1011804).}}

\author{
Daehee Cho\thanks{Department of Mathematical Sciences, Korea Advanced Institute of Science and Technology, 291 Daehak-ro, Yuseong-gu, Daejeon 34141, Republic of Korea (daehee.cho@kaist.ac.kr, mklim@kaist.ac.kr).}\and
Doosung Choi\thanks{ Department of Mathematics, Louisiana State University, Baton Rouge, LA 70803, USA ({dchoi@lsu.edu}).}\and
Mikyoung Lim\footnotemark[2]}

\date{\today}
\maketitle

\begin{abstract}
We investigate the effective elastic properties of periodic dilute two-phase composites consisting of an homogeneous isotropic matrix and a periodic array of rigid inclusions. We assume the rigid inclusion in a unit cell is a simply connected, bounded domain so that there exists an exterior conformal mapping corresponding the inclusion. Recently, an analytical series solution method for the elastic problem with a rigid inclusion was developed based on the layer potential technique and the geometric function theory \cite{Mattei:2021:EAS}.
In this paper, by using the series solution method, we derive expression formulas for the elastic moment tensors--the coefficients of the multipole expansion associated with an elastic inclusion--of an inclusion of arbitrary shape.
These formulas for the elastic moment tensors lead us to analytic asymptotic formulas for the effective parameters of the periodic elastic composites with rigid inclusions in terms of the associated exterior conformal mapping. 
\end{abstract}

%
%
\noindent {\footnotesize {\bf Keywords.} {Lame system; Elastic moment tensors; Effective parameter}}


\section{Introduction}
We consider the elastic inclusion problem in two dimensions where a periodic array of inclusions are embedded in an homogeneous isotropic matrix, where the elastic parameters of the matrix and the inclusions are known. 
The main contribution of this paper is the derivation of the analytic formulas for the elastic moment tensors and the effective parameters for an inclusion or a periodic array of rigid inclusions.

Let us consider an elastic inclusion problem where an isotropic elastic inclusion $D$ is embedded in an unbounded isotropic elastic medium. We assume the background medium has the Lam\'{e} constants $({\lambda},{\mu})$ satisfying $\mu>0$ and $ \lambda+\mu>0$. Let $D$ be a bounded, simply connected domain in $\RR^2$ with analytic boundary. 
We denote by $C$ the elastic tensors for the background, that is,
\begin{align*}
C
&:= \big(\lambda \delta_{ij}\delta_{kl} + \mu(\delta_{ik}\delta_{jl} + \delta_{il}\delta_{jk})\big)\left(e_i\otimes e_j\right)\otimes \left(e_k\otimes e_l\right),
\end{align*}
adopting the Einstein summation convention and the tensor product of the standard basis $\{e_j\}_{j=1,2}$ for $\RR^2$.
For a displacement function $\bu$, we define the Lame operator by
\begin{align*}
\mathcal{L}_{\lambda,\mu} \mathbf{u}:= \nabla\cdot(C\widehat{\nabla}\bu)=\mu\Delta\mathbf{u} +  (\lambda+\mu)\nabla\nabla\cdot\mathbf{u} 
\end{align*}
with the strain $\widehat{\nabla}\mathbf{u} = \tfrac{1}{2}(\nabla \mathbf{u} + \nabla \mathbf{u}^T)$. 
The corresponding conormal derivative on $\p D$, that represents the traction force, is
\begin{align*}
\dfrac{\partial \mathbf{u}}{\partial \nu} 
:= (C\widehat{\nabla}\mathbf{u})N
= \lambda (\nabla \cdot \mathbf{u})N + \mu(\nabla\mathbf{u}+\nabla\mathbf{u}^T)N,
\end{align*}
where $N$ is the outward unit normal to $\p D$. 
Suppose that the displacement field $\bu$ is assigned by the far-field loading $\H$ with no body forces. 
If the elastic body were homogeneous without any inclusion, we would have $\bu$ equal to the given far-field loading. The presence of the inclusion induces the perturbation in the displacement field.

For the instance that the inclusion $D$ is rigid, the displacement field $\bu$ is the solution of the system 
\begin{equation} \label{Lame_trans}
\ \left \{
\begin{array} {ll}
\ds \mathcal{L}_{\lambda,\mu} \mathbf{u}= \mathbf{0} \quad &\mbox{ in }  \mathbb{R}^2 \setminus \overline{D},\\[2mm]
\ds \mathbf{u}\big|^+ = b_1\mathbf{R}_1+b_2\mathbf{R}_2+b_3\mathbf{R}_3 \quad &\mbox{ on } \p D,\\[2mm]
\ds \mathbf{u}(x)-\H(x) = O(|x|^{-1}) \quad& \mbox{ as } |x| \rightarrow \infty,
\end{array}
\right.
\end{equation}
where $\bu|^+$ means the limit of $\bu$ from the exterior of $D$ and $\mathbf{R}_j$ are rigid displacements given by
$\mathbf{R}_1(x)=(1,0)$, $\mathbf{R}_2(x)=(0,1)$ and $\mathbf{R}_3(x)=(x_2, -x_1)$ for $x=(x_1,x_2)$. 
The real coefficients $b_j$ are determined to satisfy 
\beq\label{c_j:cond}
\int_{\p D}\dfrac{\partial \mathbf{u}}{\partial \nu}\Big|^+\cdot \mathbf{R}_j\,d\sigma=0\quad\mbox{for all }j=1,2,3.
\eeq
One can find the solution to \eqnref{Lame_trans} using the layer potential operators associated with the Kelvin matrix, denoted by $\Gamma$, the fundamental solution to the Lam\'{e} system. The layer potential solution leads to the multipole expansion of the solution to the elastic inclusion problem: for a given background polynomial solution $\H$, we have
\beq\label{multipole:def}
\bu(x)=\H(x)+\sum_{j=1}^2 \sum_{|\alpha|\geq 1}\sum_{|\beta|\geq 1}
\frac{(-1)^{|\beta|}}{\alpha!\, \beta!}\, \p^\alpha H_j({0})\, \p^\beta\Gamma(x)\,M_{\alpha\beta}^j,\quad|x|\gg1,
\eeq
$M^j_{\alpha\beta}=(m^j_{\alpha\beta 1},m^j_{\alpha \beta 2})$ being the so-called elastic moment tensors (EMTs), $\bu$ the solution to the inclusion problem. Here, $\alpha$, $\beta$ are with multi-indices. 
The EMTs depends on the elastic tensors of the background and inclusion and the geometric features, namely the shape and location, of the inclusion. 
Given its importance in relating the measurement data and the geometric and material variables of the inclusion, various aspects of the EMTs have been investigated, especially for the inverse problem of recovering elastic inclusions and the effective properties of the periodic composites \cite{Ammari:2002:CAE,Kang:2003:IEI,Lim:2011:RSI}; for more results, we refer readers to the book \cite{Ammari:2007:PMT} and references therein.

Effective properties of composite materials have been studied by many authors. Several comprehensive references on this topic include \cite{Garboczi:1996:ICO,Jeffrey:1973:CTR,Jikov:1994:HDO:book,Sangani:1990:CNC,Bieberbach:1916:KDP:book}. 
The literature most closely related to our result is \cite{Ammari:2006:ePE}. 
For the instance of an elastic inclusion problem in two-phase periodic medium, the solution admits the layer potential formulation with the periodic Green's functions are involved  instead of the Kelvin matrix, similarly to the interface problem with an inclusion. In \cite{Ammari:2006:ePE}, the following high-order asymptotic expansion by extending the Maxwell--Garnett formula:
\beq\label{effec:PT}
C^*=C + fM\left(I-fSM\right)^{-1}+O(f^3),
\eeq
$C$ being the elastic tensor in the background medium, $f$ the volume fraction of the inclusion. Here, $M$ denotes the matrix whose entries are the EMTs with $|\alpha|=|\beta|=1$.


For a simple shape such as an disk or an ellipse, the first-order EMTs were expressed \cite{Ammari:2007:BIM,Ammari:2007:PMT}. On the other hand, deriving the asymptotic formulas of the EMTs for inclusions of general shape is challenging due to complexity in the elastic system. In this paper, we employ the complex formulation for the layer potential approach of the Lam\'{e} system to derive analytic expressions for the EMTs of general shape inclusions.

Let $D$ be a planar simply connected, bounded domain. We identify the real two-dimensional plane with the complex plane. From the Riemann mapping theorem, there uniquely exist $\gamma>0$, the conformal radius, and the exterior conformal map--from the exterior of the ball centered at $0$ with the radius $\gamma$ to the exterior of $D$--of the form
\begin{align}\label{def:Psi}
\Phi(w) = w + a_0 + a_1 w^{-1}+a_2 w^{-2}+\cdots,\quad |w|>\gamma,
\end{align}
with complex coefficients $a_n$. 
As a univalent function, $\Phi$ determines the Faber polynomials $F_m(z)$ \cite{Faber:1903:PE}. 
As an essential property, each Faber polynomial admits only one negative order term in $w$, that is,
\begin{align*}
F_m(\Phi(w)) = w^{m}+ c_{m,1}w^{-1}+c_{m,2}w^{-2}+\cdots
\end{align*}
with the so-called Grunsky coefficients $c_{m,n}$. 
As developed in the geometric function theory (see, e.g., the book \cite{Duren:1983:UF}), the Faber polynomials are $m$-th order monic polynomials and form basis for complex analytic functions in the domain. 
The coefficients of the Faber polynomials and the Grunsky coefficients can be explicitly obtained by the conformal mapping coefficients $a_n$.

Recently, an analytical series solution method for the elastic problem with a rigid inclusion was developed by applying the geometric function theory the layer potential approach for the Lam\'{e} system, where the solutions are expressed in terms of the exterior conformal mapping of the inclusion and the applied far-field loading in \cite{Mattei:2021:EAS} (see also \cite{Choi:2021:ASR,Jung:2021:SEL}). This method provides a powerful tool to analyze the explicit dependence of the elastic field perturbation due to the inclusion on the inclusion's shape, in particular, the coefficients of the exterior conformal mapping.

In this paper, building open the results in \cite{Mattei:2021:EAS}, we derive explicit expressions for the EMTs in terms of the exterior conformal mapping coefficients for a bounded, simply connected rigid inclusion with analytic boundary. 
As one of the main results, we obtain the following results where $\mathbb{E}^{(1,i)}$ and $\mathbb{E}^{(2,i)}$ are defined by the expansion coefficients of the series solution  in \cite{Mattei:2021:EAS}:
\begin{theorem}\label{main:thm:intro}
Let $D$ be a bounded, simply connected, and planar Lipschitz domain.  We use \eqref{def:Psi} for the exterior conformal mapping associated with $D$. 
Then, the elastic moment tensors of $D$, $(m^{ij}_{kl})$,  have the form
\begin{align*}
\begin{aligned} 
m^{11}_{11}
&=\dfrac{\mu}{2(\kappa-1)}\left[\Re\left( \mathbb{E}^{(1,1)}\right) + \mathbb{E}^{(2,1)} \right]
- \dfrac{\mu}{2}\left[\Re\left( \mathbb{E}^{(1,2)}\right) + \mathbb{E}^{(2,2)} \right],
\\
m^{22}_{22}
&=\dfrac{\mu}{2(\kappa-1)}\left[ - \Re\left(\mathbb{E}^{(1,1)} \right)  + \mathbb{E}^{(2,1)}\right]
+ \dfrac{\mu}{2}\left[ - \Re\left( \mathbb{E}^{(1,2)}\right) + \mathbb{E}^{(2,2)} \right],
\\
m^{11}_{22}
&=\dfrac{\mu}{2(\kappa-1)}\Re\left(\mathbb{E}^{(2,1)}\right) - \dfrac{\mu}{2}\Re\left( \mathbb{E}^{(1,2)} \right),
\\
m^{12}_{12}
&= \dfrac{\mu}{2}\Im\left(\mathbb{E}^{(1,3)}\right),
\\
m^{12}_{11}
&= \dfrac{\mu}{2}\left[\Re\left( \mathbb{E}^{(1,3)}\right) + \mathbb{E}^{(2.3)} \right],
\\
m^{21}_{22}
&= \dfrac{\mu}{2}\left[-\Re\left(\mathbb{E}^{(1,3)}\right) + \mathbb{E}^{(2,3)}\right],
\end{aligned}
\end{align*}
where $\mathbb{E}^{(1,i)}$ and $\mathbb{E}^{(2,i)}$ are defined by
\begin{align*}
\mathbb{E}^{(1,i)}
:= 2\pi\left(\gamma c_{-1}^{(i)} + \sum_{m=1}^{\infty}a_m\dfrac{c_m^{(i)}}{\gamma^{m}}\right),
\quad
\mathbb{E}^{(2,i)}
:= 2\pi\left(\gamma \overline{c_1^{(i)}} + \sum_{m=1}^{\infty}a_m\dfrac{\overline{c_{-m}^{(i)}}}{\gamma^{m}}\right), \quad  i=1,2,3.
\end{align*}
\end{theorem}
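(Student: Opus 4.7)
The plan is to establish the formulas by asymptotic matching, as $|x|\to\infty$, of two representations of the exterior solution to \eqref{Lame_trans}. The multipole expansion \eqref{multipole:def} identifies the first-order EMTs $m^{ij}_{kl}$ (corresponding to $|\alpha|=|\beta|=1$) as the coefficients of the leading $O(|x|^{-1})$ decay of $\bu-\H$ for affine background loadings $\H$. The series solution of \cite{Mattei:2021:EAS} represents the same $\bu$ outside $D$ in terms of the exterior conformal mapping $\Phi$ and of coefficients $c_m^{(i)}$ that depend on the loading index $i$. Matching the two expansions will pin down the EMTs.

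First I would fix three linearly independent affine loadings $\H^{(1)},\H^{(2)},\H^{(3)}$ whose symmetric gradients span the space of $2\times 2$ symmetric strain matrices, for instance uniaxial stretches along $e_1$ and $e_2$ and a pure shear. For each loading the multipole expansion \eqref{multipole:def} truncated at $|\alpha|=|\beta|=1$ expresses the $O(|x|^{-1})$ tail of $\bu^{(i)}-\H^{(i)}$ as a linear combination of the six distinct first-order EMT components $m^{11}_{11},m^{22}_{22},m^{11}_{22},m^{12}_{12},m^{12}_{11},m^{21}_{22}$ paired with the first derivatives of the Kelvin matrix $\Gamma$.

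Next I would expand the series solution of \cite{Mattei:2021:EAS} at infinity. In the variable $w=\Phi^{-1}(x)$ on $|w|>\gamma$, the solution is represented through two analytic potentials whose expansions in negative powers of $w$ have coefficients $c_m^{(i)}$ built from the conformal-mapping coefficients $\{a_m\}$ via the Faber--Grunsky machinery recalled in the introduction. Converting back to Cartesian coordinates through the Kolosov--Muskhelishvili representation of the elastic displacement, only the $w^{-1}$ terms of each potential contribute at order $|x|^{-1}$. Their summation, with the weights $a_m/\gamma^m$ coming from the Laurent inversion of $\Phi$, produces exactly the aggregated quantities $\mathbb{E}^{(1,i)}$ and $\mathbb{E}^{(2,i)}$ appearing in the statement. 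Matching the two $O(|x|^{-1})$ contributions for $i=1,2,3$ and separating real and imaginary parts yields six real linear identities in the six unknown first-order EMTs, and solving this system produces the stated formulas.

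I expect the main obstacle to be the bookkeeping between the complex-variable formulation, in which the series solution is naturally phrased, and the real tensorial indices on the EMTs. The Kolosov constant $\kappa$ enters the Kolosov--Muskhelishvili decomposition of $\Gamma$ and is what fixes the $\mu/(2(\kappa-1))$ prefactor appearing in the formulas for $m^{11}_{11}$, $m^{22}_{22}$ and $m^{11}_{22}$, whereas the shear-type components $m^{12}_{12},m^{12}_{11},m^{21}_{22}$ come with a pure $\mu/2$ prefactor because the corresponding loading does not activate the dilatational mode that carries the $\kappa$ dependence. The constraint \eqref{c_j:cond} that determines the rigid constants $b_j$ must also be propagated through the series expansion to confirm that the rigid-body part of $\bu|^+$ does not contaminate the $O(|x|^{-1})$ asymptotics, and the correct identification of which real or imaginary part of $\mathbb{E}^{(1,i)},\mathbb{E}^{(2,i)}$ pairs with which EMT entry is the delicate point of the argument.
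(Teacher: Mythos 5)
Your strategy---far-field matching of the multipole expansion \eqref{multipole:def} against the $w^{-1}$ terms of the Mattei--Lim series solution---is a viable route and is genuinely different from the paper's. The paper never passes to the far field for this theorem: since the first-order EMTs are \emph{defined} in \eqref{def:EMT} as boundary moments $\int_{\partial D} y^{\beta} g^{j}_{\alpha}\,d\sigma$ of the layer-potential density, the authors simply (i) compute $\int_{\partial D} z\varphi^{(i)}\,d\sigma$ and $\int_{\partial D} z\overline{\varphi^{(i)}}\,d\sigma$ directly by pairing the Laurent expansion of $\Phi$ with the density basis $e^{\pm im\theta}/h$ (orthogonality on the circle immediately yields $\mathbb{E}^{(1,i)}$ and $\mathbb{E}^{(2,i)}$), and (ii) decompose the real affine loadings $xe_1$, $ye_2$, $xe_2$, $ye_1$ into the three complex loadings $u^{(1)},u^{(2)},u^{(3)}$ of \eqref{three:sol} plus rigid motions and constants, so that by linearity $g^{1}_{1}=\tfrac{\mu}{\kappa-1}\varphi^{(1)}-\mu\varphi^{(2)}$, etc. Everything then reduces to taking real and imaginary parts of $\int z g^{j}_{i}$ and $\int \overline{z}g^{j}_{i}$. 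Your approach would instead require expanding $\partial^{\beta}\Gamma$ in Kolosov--Muskhelishvili form and inverting the pairing between the vector-valued tail and the EMT entries; this is essentially what Lemma 3.3 of the paper sets up, but it buys you nothing here because the far-field coefficients you would extract are, by construction, exactly the boundary moments the paper computes in one line.

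The genuine gap is that the step where the theorem actually lives is the one you defer: the ``correct identification of which real or imaginary part of $\mathbb{E}^{(1,i)},\mathbb{E}^{(2,i)}$ pairs with which EMT entry.'' Your proposal gives no mechanism for producing the specific coefficient pattern (the $\tfrac{\mu}{2(\kappa-1)}$ versus $\tfrac{\mu}{2}$ prefactors, the signs, and the appearance of $\Re$ versus $\Im$); your heuristic that the shear components ``do not activate the dilatational mode'' is not a substitute for the decomposition \eqref{vector:to:complex}, which is where the factors $\tfrac{\mu}{\kappa-1}$ and $\mu$ actually originate. You also need, and do not supply, the fact that $\Im\big(\mathbb{E}^{(2,i)}\big)=0$ (a consequence of $\varphi^{(i)}\in L^2_{\Psi}(\partial D)$ paired against the rotation $\mathbf{R}_3$) and the translation invariance $\int z\varphi^{(i)}=\int(z+c)\varphi^{(i)}$, both of which are used to reduce the raw matching identities to the stated closed forms. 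Finally, your count of ``six real linear identities in six unknowns'' is imprecise: matching the angular dependence of the $O(|x|^{-1})$ tail for three loadings produces the full set of twelve numbers $m^{ij}_{kl}$ before the symmetry \eqref{EMT:symmetry} is invoked, and that symmetry (quoted from the literature) is what the paper uses to extract the auxiliary identities of Lemma \ref{lem:E}. As written, the proposal is a plausible roadmap rather than a proof.
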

In view of \eqnref{effec:PT}, Theorem \ref{main:thm:intro} provides an explicit asymptotic formula for the effective parameter of  two-phase medium.

%
%

\section{Preliminary}\label{sec:preliminary}

\subsection{Elastic moment tensors}

A fundamental solution $\Gamma = (\Gamma_{ij})_{1\leq i,j\leq2}$ to the Lam\'{e} operator $\mathcal{L}_{\lambda,\mu}$ is the Kelvin matrix $\Gamma$ that is given by
\begin{align*}
\Gamma_{ij}(x) = \frac{\alpha}{2\pi}\delta_{ij}\log|x| - \frac{\beta}{2\pi}\dfrac{x_ix_j}{|x|^2},\quad x\neq 0
\end{align*}
with
\begin{align*}
\alpha = \tfrac{1}{2}\left( \tfrac{1}{\mu} + \tfrac{1}{2\mu+\lambda}\right)
\quad\text{and}\quad
\beta = \tfrac{1}{2}\left( \tfrac{1}{\mu} - \tfrac{1}{2\mu+\lambda}\right).
\end{align*}
In the same manner, we define $\tilde{\alpha}$ and $\tilde{\beta}$ using $\tilde{\lambda}$ and $\tilde{\mu}$.

Let $\Psi$ be the space of rigid displacement, that is,
\begin{align*}
\Psi &:= \left\{ \mathbf{\psi}\in H^1(D)^{2} : \partial_i\psi_j + \partial_j\psi_i=0,~1\leq i,j\leq 2 \right\}\\
&= \text{Span}\left\{\mathbf{R}_j : j=1,2,3\right\}
\end{align*}
with $\mathbf{R}_1(x)=(1,0)$, $\mathbf{R}_2(x)=(0,1)$ and $\mathbf{R}_3(x)=(x_2, -x_1)$ 
and define
\begin{align*}
L^2_{\Psi}(\partial D) := \left\{ f\in L^2(\partial D)^2 : \int_{\partial D}f\cdot \psi d\sigma = 0~\text{for all } \psi\in \Psi\right\}.
\end{align*}


Let us define the single-layer potentials for the operator $\mathcal{L}$ by
\begin{align*}
\mathbf{S}_{\partial D}[{\bm \varphi}](x) &:= \int_{\partial D} \Gamma(x-y){\bm{\varphi}}(y) d\sigma(y) ,\quad x\in\mathbb{R}^{2}
\end{align*}
for $\varphi = (\varphi_1,\varphi_2)\in L^2(\partial D)^{2}$.
It is well-known that the solution to \eqref{Lame_trans} has the form
\begin{align}\label{Scal:density}
\mathbf{u}(x)=
\mathbf{H}(x)  + \mathbf{S}_{\partial D}[{\bm \varphi}](x) ,\quad x\in\mathbb{R}^{2}\setminus \overline{D}
\end{align}
for some density $\bm{\varphi}\in  L^2_\Psi(\partial D)^2$.

\begin{definition} 
 Let $|\alpha|=|\beta|=1$. 
Elastic moment tensors (EMTs) of the rigid inclusion $D$ associated with the background Lam\'{e} constants $(\lambda,\mu)$ are defined by
\begin{align}\label{def:EMT}
M^{j}_{\alpha\beta} = \left( m^{j}_{\alpha \beta 1}, m^{j}_{\alpha \beta 2}\right) := \int_{\partial D}y^{\beta}g^{j}_{\alpha}(y)\,d\sigma(y),
\end{align}
where $g^{j}_{\alpha}\in L_{\Psi}^2(\partial D)^{2} $ be the density function satisfying \eqnref{Scal:density} with 
$\mathbf{H}=x^{\alpha}e_j$. 
\end{definition}
When $|\alpha|=|\beta|=1$, we set $\alpha=e_i$, $\beta=e_k$ $(i,k=1,2)$ and denote
\begin{align*}
m^{ij}_{kl} := m^{j}_{\alpha\beta l},\quad j,l=1,2.
\end{align*}
These first-order EMTs have symmetric properties \cite{Ammari:2002:CAE},
\begin{align}\label{EMT:symmetry}
m^{ij}_{kl} = m^{ji}_{kl} = m^{ij}_{lk} = m^{kl}_{ij}, \quad i,j,k,l=1,2.
\end{align}

%
%

\subsection{Conformal mapping and Faber polynomial}
Let $\Phi:  \{w\in\mathbb{C}: |w|>\gamma\}\to \mathbb{C}\setminus \overline{D} $ be given by \eqnref{def:Psi}.
We then denote scale factors with respect to $\rho$ and $\theta$ as follows:
\begin{align*}
h(\rho,\theta) = \left| \dfrac{\partial \psi}{\partial \rho}\right| = \left|\dfrac{\partial \psi}{\partial \theta}\right|.
\end{align*}
Let us define a density basis, that is, a basis for functions defined on $\partial D$, 
\begin{align}\label{density:basis}
\varphi_{m}(z) = \dfrac{e^{im\theta}}{h},\quad \varphi_{-m}(z) = \dfrac{e^{-im\theta}}{h} \quad \mbox{for } m\in\mathbb{N}.
\end{align}
Notice that the length element on $\partial D$ is given by 
\begin{align*}
d\sigma(\zeta)=h(\rho_0,\theta)d\theta \quad\text{for }\zeta=\psi(\rho_0,\theta).
\end{align*}

The Faber polynomials $F_m$ are determined by the generating relation 
\begin{align*}
\dfrac{w\Phi'(w)}{\Phi(w)-z} = \sum_{m=0}^{\infty}F_m(z)w^{-m},\qquad z\in \overline{D},~|w|>\gamma. 
\end{align*}
The Faber polynomial can be represented by
\beq\label{FaberP}
F_m(z) = \sum_{n=0}^{m} p_{mn} z^n,
\eeq
where $\{ p_{mn} \}_{0\leq n \leq m}$ depends only on $\{ a_n \}_{0 \leq n \leq m-1}$. 
For instance, the first few Faber polynomials has the form
\begin{align*}
F_0(z)&=1,\\[2mm]
F_1(z)&=z-a_0,\\[2mm]
F_2(z)&=z^2-2a_0 z+a_0^2-2a_1.
\end{align*}

\subsection{Explicit series expansion for the single-layer potential in complex form}

We identify $x=(x_1,x_2)\in\RR^2$ with $z=x_1+\rmi x_2\in\CC$. We set
\beq\label{complex:expression}
u(z)=\big((\bu)_1+\rmi (\bu)_2\big)(x),
\eeq
begin $(\cdot)_j$ the $j$-th component of a vector, and, in the same way, define the complex functions $\varphi$ and $H$ corresponding to $\bvp$ and $\H$, respectively. We also define the complex-valued single layer potential as
$$S_{\p \Om}[\varphi](z)=\,\big(({\mathbf{S}}_{\p \Om}[\bm{\varphi}])_1+\rmi ({\mathbf{S}}_{\p \Om}[[\bm{\varphi}])_2\big)(x).$$

For plane elastostatics, the solution of the Lam\'e system has a representation in terms of two holomorphic functions $h$ and $l$ \cite{Muskhelishvili:1953:SBP},
\begin{align*}
2\mu {H}(z) = \kappa h(z) - z\overline{h'(z)} - \overline{l(z)}.
\end{align*}
In particular, as the solution of the Lam\'e system in $\mathbb{C}\setminus \overline{D}$ is the single-layer potential $\mathcal{S}_D[\varphi](z)$, we obtain
\begin{align*}
2\mu\mathcal{S}_D[\varphi](z) = \kappa f(z) - z\overline{f'(z)} - \overline{g(z)}
\end{align*}
for some holomorphic functions $f$ and $g$ expressed as
\begin{align*}
f(z) &= f[\varphi](z) = \beta\mathcal{L}[\varphi](z),\\
g(z) &= g[\varphi](z) = -\alpha \mathcal{L}[\overline{\varphi}](z) - \beta \mathcal{C}[\overline{\zeta}\varphi](z)
\end{align*}
with the complex integral operators,
\begin{align*}
\mathcal{L}[\psi](z) &:= \dfrac{1}{2\pi}\int_{\partial D}\log(z-\zeta)\psi(\zeta)d\sigma(\zeta),\\
\mathcal{C}[\psi](z) &:= \dfrac{\partial }{\partial z}\mathcal{L}[\psi](z) = \dfrac{1}{2\pi}\int_{\partial D}\dfrac{\psi(\zeta)}{z-\zeta}d\sigma(\zeta)
\end{align*}
for any complex function $\psi$ \cite{Ammari:2004:RSI, Ando:2018:SPN}.

We consider the expansion of the density function $\varphi$ on $\partial D$ in terms of the density basis \eqref{density:basis} on $\partial D$:
\begin{align}\label{density}
\varphi(z) = \sum_{m=1}^{\infty}  \left( c_{-m}\varphi_{-m} + c_{m}\varphi_{m} \right)
\end{align}
where $c_n$ are complex numbers.
The constant term, $c_0$ is zero due to $\varphi\in L^2_\Psi(\partial D)$.

As $h$ and $l$ are analytic and Faber polynomials form a basis for analytic functions, we obtain
\begin{align*}
2\mu{H}(z) = \kappa \sum_{m=0}^{\infty}A_m F_m(z) - z\sum_{m=1}^{\infty}\overline{A_m F_m'(z)} - \sum_{m=0}^{\infty}\overline{B_m F_m(z)}
\end{align*}
for some complex sequences $\{A_m\}$ and $\{B_m\}$.


\begin{theorem}[\cite{Mattei:2021:EAS}]
For $z=\psi(w) \in \mathbb{C}\setminus \overline{D}$, the single-layer potential $\mathcal{S}_D[\varphi](z)$ has the following representation from \eqref{density}:
\begin{align}\label{single:rigid:exterior}
2\mathcal{S}_{D}[\varphi](z) = -\alpha v_1 + \beta \psi(w)\overline{v_2(w)} - \beta\overline{v_3(w)},
\end{align}
where
\begin{align*}
v_1 &= \sum_{m=1}^{\infty}\dfrac{1}{m}\left[
c_{-m}(\overline{F_m(z)} - \overline{w^{m}} + w^{-m}) + c_m(F_m-w^{m}+\overline{w^{-m}})
\right],\\
v_2 &= \sum_{m=1}^{\infty}\left[
c_{-m}(\tilde{F}_{-m}(z) - G_{-m}(w)) + c_m(\tilde{F}_{m}(z)-G_m(w))
\right],\\
v_3 &= \sum_{m=1}^{\infty}\left[
c_{-m}\sum_{k=-1}^{\infty}\overline{a_k}(\tilde{F}_{k-m}(z) - G_{k-m}(w)) 
+ c_m\sum_{k=-1}^{\infty}\overline{a_k}(\tilde{F}_{k+m}(z) - G_{k+m}(w))
\right]
\end{align*}
with
\begin{align*}
\tilde{F}_k(z) = \begin{cases}
\dfrac{F_k'(z)}{k}& k \geq 1,\\[2mm]
0&k\leq 0
\end{cases}
\quad \mbox{and} \quad G_k(w):=\dfrac{w^{k-1}}{\psi'(w)}\quad \mbox{for } k\in\mathbb{Z},~|w|>\gamma.
\end{align*}
\end{theorem}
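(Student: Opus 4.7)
The plan is to substitute the basis expansion \eqref{density} of the density into the Muskhelishvili representation $2\mu\mathcal{S}_D[\varphi]=\kappa f-z\overline{f'}-\overline{g}$ stated just before the theorem, with $f=\beta\mathcal{L}[\varphi]$ and $g=-\alpha\mathcal{L}[\overline{\varphi}]-\beta\mathcal{C}[\overline{\zeta}\varphi]$, and then to evaluate each of the scalar operators $\mathcal{L}[\varphi_{\pm m}]$ and $\mathcal{C}[\overline{\zeta}\varphi_{\pm m}]$ at an exterior point $z=\psi(w)$ in closed form using the conformal map. By linearity, the three holomorphic blocks of $2\mu\mathcal{S}_D[\varphi]$ then become explicit $w$-series, and collecting terms reproduces the three summands $v_1$, $v_2$, $v_3$. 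The central analytical input is the Faber generating identity, which, after expansion of the kernels $\log(z-\zeta)$ and $1/(z-\zeta)$, reduces every boundary integral to a residue computation in the auxiliary $w'$-plane.

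To carry this out I would parameterize $\zeta=\psi(\gamma e^{i\theta})$ so that $d\sigma=h\,d\theta$ and $\varphi_{\pm m}(\zeta)\,d\sigma(\zeta)=e^{\pm im\theta}\,d\theta$; each integral against $\varphi_{\pm m}$ then picks off the $\pm m$-th Fourier coefficient in $\theta$. Using the identity $\log(\psi(w)-\psi(w'))=\log w-\sum_{m\geq 1}F_m(\psi(w'))w^{-m}/m$ valid for $|w|>\gamma$, together with $F_m(\psi(w'))=(w')^m+\sum_{n\geq 1}c_{mn}(w')^{-n}$, yields the joint Laurent expansion of $\log(z-\zeta)$ in $w$ and $w'$. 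Reading off the appropriate Fourier mode gives $\mathcal{L}[\varphi_{\pm m}](z)$ as a finite combination of $F_m(z)/m$, $w^{\pm m}/m$, and $w^{\mp m}/m$; summed against $c_{\pm m}$ this assembles $-\alpha v_1$ once the identity $\kappa\beta=\alpha$ is used. Differentiating in $z$ produces $\mathcal{C}[\varphi_{\pm m}]$, whose boundary residues collect into $G_{\pm m}(w)=w^{\pm m-1}/\psi'(w)$ while the Faber pieces become $\tilde F_m(z)$; multiplication by $-z=-\psi(w)$ and conjugation then yield $\beta\psi(w)\overline{v_2(w)}$. For the remaining term $\overline{g}$, the new ingredient is the Laurent expansion $\overline{\zeta}=\overline{\psi(w')}=\sum_{k\geq -1}\overline{a_k}(w')^{-k}$ on $|w'|=\gamma$ (setting $a_{-1}:=\gamma^2$, since $\overline{w'}=\gamma^2/w'$ there); multiplying $\varphi_{\pm m}$ by this series shifts the Fourier index by $k$, so the same residue calculus produces $\sum_{k\geq -1}\overline{a_k}(\tilde F_{k\pm m}(z)-G_{k\pm m}(w))$, i.e.\ exactly the inner sum of $v_3$.

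The main obstacle is the careful residue and branch bookkeeping that makes the precise combinations $F_m(z)-w^m+\overline{w^{-m}}$ (and their counterparts with $c_{-m}$) appear in $v_1$: the $\log w$ piece, the leading $(w')^m$ piece of $F_m(\psi(w'))$, and the Grunsky tail each contribute residues at $w'=0$ and on $|w'|=\gamma$ whose signs and positions must align with the asymmetric conjugation pattern between the $c_m$ and $c_{-m}$ coefficients. Respecting the convention $\tilde F_k=F_k'/k$ for $k\geq 1$ and $\tilde F_k=0$ for $k\leq 0$, and checking that the $\overline{\zeta}$-expansion converges uniformly on the boundary circle so that the termwise application of $\mathcal{C}$ is justified, also demand attention. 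Once this is controlled, the assembly into \eqref{single:rigid:exterior} is a direct combination of the three computed blocks.
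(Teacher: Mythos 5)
The paper itself offers no proof of this statement: it is imported verbatim from Mattei--Lim \cite{Mattei:2021:EAS} (their Theorem 3.2), so the only meaningful comparison is with the derivation in that reference. Your outline reproduces that derivation's strategy faithfully: start from the Muskhelishvili representation $2\mu\mathcal{S}_D[\varphi]=\kappa f-z\overline{f'}-\overline{g}$ with $f=\beta\mathcal{L}[\varphi]$, $g=-\alpha\mathcal{L}[\overline\varphi]-\beta\mathcal{C}[\overline\zeta\varphi]$, use $\kappa\beta=\alpha$, expand the kernels via the Faber generating identity and the Grunsky expansion $F_m(\psi(w'))=(w')^m+\sum_n c_{mn}(w')^{-n}$, and extract Fourier modes on $|w'|=\gamma$ term by term against the basis $\varphi_{\pm m}=e^{\pm im\theta}/h$. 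This is the right plan and it correctly identifies where each of $v_1$, $v_2$, $v_3$ comes from.

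One step as written would fail and needs repair: the Laurent expansion of $\overline\zeta$ on the boundary circle. From $\psi(w')=\sum_{k\ge-1}a_k(w')^{-k}$ and $\overline{w'}=\gamma^2/w'$ on $|w'|=\gamma$, one gets
\begin{equation*}
\overline{\psi(\gamma e^{i\theta})}=\sum_{k\ge-1}\overline{a_k}\,\gamma^{-k}e^{ik\theta},
\end{equation*}
i.e.\ the $k$-th term shifts the Fourier index by $+k$ and carries a factor $\gamma^{-k}$ (the $k=-1$ term being $\gamma e^{-i\theta}$). Your stated series $\sum_{k\ge-1}\overline{a_k}(w')^{-k}$ has the wrong sign in the exponent for $k\ge1$ and omits the $\gamma$-powers; with it the inner sum of $v_3$ would come out with indices $-k\pm m$ rather than $k\pm m$. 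The index shift by $+k$ is what actually produces $\tilde F_{k\pm m}-G_{k\pm m}$, so this must be corrected, and the accompanying powers of $\gamma$ have to be tracked consistently with the normalizations of $\varphi_{\pm m}$, $\tilde F_k$, and $G_k$ to land exactly on the stated coefficients (a sanity check on the disk $\psi(w)=w$, where $\mathcal{C}[\varphi_{-m}]=\gamma^{m}z^{-m-1}$ while $\tilde F_{-m}-G_{-m}=-z^{-m-1}$, shows these $\gamma$-factors are genuinely present and must be absorbed somewhere). Beyond that, the residue bookkeeping you flag as the main obstacle is indeed where the work lies, but the plan is sound.
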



\section{Asymptotic formula for elastic moment tensors}

\subsection{EMTs with general conformal mapping} 
We use certain background solutions as follows 
\begin{align}\label{three:sol}
u^{(1)}(z) = \dfrac{\kappa-1}{2\mu}F_1,\quad
u^{(2)}(z) = -\dfrac{1}{2\mu}\overline{F_1(z)},\quad
u^{(3)}(z) = \dfrac{i}{2\mu}\overline{F_1(z)},
\end{align}
and denote its corresponding exterior density functions $\varphi^{(i)}\in L^2_{\Psi}(\partial D)$ as 
\begin{align}\label{varphi_i}
\varphi^{(i)}  = \sum_{m=1}^{\infty}c_m^{(i)}\dfrac{e^{i m \theta}}{h} + \sum_{m=1}^{\infty}c_{-m}^{(i)}\dfrac{e^{-im\theta}}{h},\qquad i=1,2,3.
\end{align}
that satisfies \eqref{Lame_trans} and \eqref{Scal:density}.

\begin{lemma}\label{lem:E}
Let $D$ be a bounded, simply connected, and planar Lipschitz domain.  We use \eqref{def:Psi} for the exterior conformal mapping associated with $D$. Let us define 
\begin{align}\label{eq:E}
\mathbb{E}^{(1,i)}
:= 2\pi\left(\gamma c_{-1}^{(i)} + \sum_{m=1}^{\infty}a_m\dfrac{c_m^{(i)}}{\gamma^{m}}\right),
\quad
\mathbb{E}^{(2,i)}
:= 2\pi\left(\gamma \overline{c_1^{(i)}} + \sum_{m=1}^{\infty}a_m\dfrac{\overline{c_{-m}^{(i)}}}{\gamma^{m}}\right), \quad  i=1,2,3,
\end{align}
where $c_{m}^{(i)}$ is given in \eqref{varphi_i}.
Then, $\Im\left(\mathbb{E}^{(2,i)}\right)=0$ and
\begin{align*}
\Re\left(\mathbb{E}^{(1,1)}\right) = -(\kappa-1)\mathbb{E}^{(2,2)},\quad
\Re\left(\mathbb{E}^{(1,3)}\right) = -\Im\left(\mathbb{E}^{(1,2)}\right),\quad
\Im\left(\mathbb{E}^{(1,1)}\right) = (\kappa-1)\mathbb{E}^{(2,3)}.
\end{align*}
\end{lemma}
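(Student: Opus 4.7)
The plan is to recast $\mathbb{E}^{(1,i)}$ and $\mathbb{E}^{(2,i)}$ as boundary integrals of the density $\varphi^{(i)}$, and then extract all four identities from the membership $\varphi^{(i)}\in L^2_\Psi(\partial D)$ together with a reciprocity identity coming from the symmetry of the single-layer operator. Using $z=\Phi(w)=w+a_0+\sum_{m\ge 1}a_m w^{-m}$ with $w=\gamma e^{i\theta}$ on $\partial D$ and the Fourier expansion \eqref{varphi_i}, a direct termwise integration (only the modes $e^{\pm i\theta}$ of $z$ pair nontrivially with each summand) gives
\begin{equation*}
\mathbb{E}^{(1,i)}=\int_{\partial D} z\,\varphi^{(i)}(z)\,d\sigma(z),\qquad \mathbb{E}^{(2,i)}=\int_{\partial D} z\,\overline{\varphi^{(i)}(z)}\,d\sigma(z).
\end{equation*}

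For the first claim $\Im\mathbb{E}^{(2,i)}=0$, I would test $\varphi^{(i)}$ against $\mathbf{R}_3(x)=(x_2,-x_1)$, whose complex form is $-iz$. Membership in $L^2_\Psi(\partial D)$ then yields $0=\int\bvp^{(i)}\cdot\mathbf{R}_3\,d\sigma=\Re\int\varphi^{(i)}\overline{(-iz)}\,d\sigma=\Re(i\,\overline{\mathbb{E}^{(2,i)}})=\Im\mathbb{E}^{(2,i)}$. The analogous pairings against $\mathbf{R}_1,\mathbf{R}_2$ give the zero-frequency identity $\int_{\partial D}\varphi^{(i)}\,d\sigma=0$ (also immediate from \eqref{varphi_i}), which will be needed below.

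For the remaining three relations, the key device is the reciprocity identity
\begin{equation*}
\Re\int_{\partial D}\varphi^{(j)}\,\overline{H^{(i)}}\,d\sigma=\Re\int_{\partial D}H^{(j)}\,\overline{\varphi^{(i)}}\,d\sigma,\qquad i,j\in\{1,2,3\}.
\end{equation*}
To derive this, I would use the symmetry of the Kelvin matrix (which makes the real-form single layer self-adjoint, giving $\Re\int \mathcal{S}_D[\varphi^{(j)}]\overline{\varphi^{(i)}}\,d\sigma=\Re\int\varphi^{(j)}\overline{\mathcal{S}_D[\varphi^{(i)}]}\,d\sigma$) together with the boundary relation $\mathcal{S}_D[\varphi^{(k)}]|_{\partial D}=r^{(k)}-H^{(k)}$ forced by \eqref{Lame_trans}, where $r^{(k)}=b_1^{(k)}+ib_2^{(k)}-ib_3^{(k)}z$ is a rigid motion. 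Substituting on both sides, the $r$-contributions collapse to combinations of $b_3^{(\cdot)}\Im\mathbb{E}^{(2,\cdot)}$ (after using $\int\varphi^{(\cdot)}\,d\sigma=0$) and therefore vanish by the previous paragraph.

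Finally, I would substitute $H^{(1)}=\tfrac{\kappa-1}{2\mu}(z-a_0)$, $H^{(2)}=-\tfrac{1}{2\mu}(\bar z-\bar a_0)$, $H^{(3)}=\tfrac{i}{2\mu}(\bar z-\bar a_0)$ into the reciprocity identity for the ordered pairs $(i,j)=(1,2),(2,3),(1,3)$. The constant $a_0$ terms drop out via $\int\varphi^{(k)}\,d\sigma=0$; the linear terms recognize as $\mathbb{E}^{(1,\cdot)}$ or $\overline{\mathbb{E}^{(2,\cdot)}}$; and taking real parts (exploiting $\Im\mathbb{E}^{(2,\cdot)}=0$) produces in turn $\Re\mathbb{E}^{(1,1)}=-(\kappa-1)\mathbb{E}^{(2,2)}$, $\Re\mathbb{E}^{(1,3)}=-\Im\mathbb{E}^{(1,2)}$, and $\Im\mathbb{E}^{(1,1)}=(\kappa-1)\mathbb{E}^{(2,3)}$. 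The main delicate point is the cancellation of the rigid-motion boundary terms in the reciprocity derivation, which forces the order of the argument: one must establish $\Im\mathbb{E}^{(2,i)}=0$ before applying the reciprocity.
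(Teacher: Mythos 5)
Your proof is correct, and for the three relations it takes a genuinely different route from the paper. The first two steps coincide with the paper's: the identities $\mathbb{E}^{(1,i)}=\int_{\partial D}z\varphi^{(i)}d\sigma$ and $\mathbb{E}^{(2,i)}=\int_{\partial D}z\overline{\varphi^{(i)}}d\sigma$ are exactly \eqref{E:reformula}, and $\Im\bigl(\mathbb{E}^{(2,i)}\bigr)=0$ is obtained in \eqref{ImE2} by the same pairing with $\mathbf{R}_3\in\Psi$. Where you diverge is in the source of the three cross-relations. The paper first expresses the densities $g^j_\alpha$ through $\varphi^{(1)},\varphi^{(2)},\varphi^{(3)}$ via \eqref{complex:first:density}, computes the full list of first-order EMTs in terms of the $\mathbb{E}^{(s,t)}$ via \eqref{EMT:decompose}, and then reads off the relations by imposing the known symmetry $m^{ij}_{kl}=m^{kl}_{ij}$ of \eqref{EMT:symmetry}, which is quoted from the literature. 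You instead prove a reciprocity identity $\Re\int_{\partial D}\varphi^{(j)}\overline{H^{(i)}}\,d\sigma=\Re\int_{\partial D}H^{(j)}\overline{\varphi^{(i)}}\,d\sigma$ directly from the symmetry and evenness of the Kelvin matrix (self-adjointness of $\mathbf{S}_{\partial D}$ under the real pairing) together with the rigid boundary condition $\mathcal{S}_D[\varphi^{(k)}]|_{\partial D}=r^{(k)}-H^{(k)}$, and then specialize to the pairs $(1,2)$, $(2,3)$, $(1,3)$; I checked the three resulting identities and they come out exactly as stated. In effect you are inlining, for the rigid-inclusion setting, the Betti-type argument that underlies the cited EMT symmetry, which makes your proof of the lemma self-contained and independent of the EMT formalism and of the reference for \eqref{EMT:symmetry} (whose statement there is for finite Lam\'e contrasts rather than the rigid limit, so avoiding it is a genuine advantage); the paper's route, on the other hand, gets the relations essentially for free once the EMT table needed for Theorem \ref{theorem:firstEMT} has been computed anyway. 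One small stylistic remark: the rigid-motion boundary terms in your reciprocity derivation vanish at once because $\varphi^{(i)},\varphi^{(j)}\in L^2_\Psi(\partial D)$ are orthogonal to all of $\Psi$, so the ordering constraint you emphasize (establishing $\Im\bigl(\mathbb{E}^{(2,i)}\bigr)=0$ before invoking reciprocity) is not actually forced, though your bookkeeping through $b_3^{(\cdot)}\Im\bigl(\mathbb{E}^{(2,\cdot)}\bigr)$ is equivalent and correct.
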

The proof for Lemma \ref{lem:E} is included in those of the following theorem.
Note that $\mathbb{E}^{(i,j)}$ are specific cases of the modified EMTs, $\widetilde{\mathbb{E}}_{nm}^{(t,s)}$ with $n=m=1$ (see Definition 5.1 \cite{Cho:2024:ASR}).

\begin{theorem}\label{theorem:firstEMT}
Let $D$ be a bounded, simply connected, and planar Lipschitz domain.  We use \eqref{def:Psi} for the exterior conformal mapping associated with $D$. 
Then, the elastic moment tensors of $D$, $(m^{ij}_{kl})$,  have the form
\begin{align}\label{first:emt:formula}
\begin{aligned} 
m^{11}_{11}
&=\dfrac{\mu}{2(\kappa-1)}\left[\Re\left( \mathbb{E}^{(1,1)}\right) + \mathbb{E}^{(2,1)} \right]
- \dfrac{\mu}{2}\left[\Re\left( \mathbb{E}^{(1,2)}\right) + \mathbb{E}^{(2,2)} \right],
\\
m^{22}_{22}
&=\dfrac{\mu}{2(\kappa-1)}\left[ - \Re\left(\mathbb{E}^{(1,1)} \right)  + \mathbb{E}^{(2,1)}\right]
+ \dfrac{\mu}{2}\left[ - \Re\left( \mathbb{E}^{(1,2)}\right) + \mathbb{E}^{(2,2)} \right],
\\
m^{11}_{22}
&=\dfrac{\mu}{2(\kappa-1)}\Re\left(\mathbb{E}^{(2,1)}\right) - \dfrac{\mu}{2}\Re\left( \mathbb{E}^{(1,2)} \right),
\\
m^{12}_{12}
&= \dfrac{\mu}{2}\Im\left(\mathbb{E}^{(1,3)}\right),
\\
m^{12}_{11}
&= \dfrac{\mu}{2}\left[\Re\left( \mathbb{E}^{(1,3)}\right) + \mathbb{E}^{(2.3)} \right],
\\
m^{21}_{22}
&= \dfrac{\mu}{2}\left[-\Re\left(\mathbb{E}^{(1,3)}\right) + \mathbb{E}^{(2,3)}\right],
\end{aligned}
\end{align}
where $\mathbb{E}^{(1,i)}$ and $\mathbb{E}^{(2,i)}$ are defined in \eqref{eq:E}.
\end{theorem}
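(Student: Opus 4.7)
The plan is to interpret $\mathbb{E}^{(1,i)}$ and $\mathbb{E}^{(2,i)}$ as concrete boundary integrals of $\varphi^{(i)}$ against $\zeta$ or $\overline{\zeta}$, decompose $\varphi^{(i)}$ by linearity into the canonical densities attached to the six monomial loadings $x_k\e_j$, and invert the resulting $6\times 6$ linear system to obtain \eqref{first:emt:formula}.

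For the first step, the strategy is to parametrise $\partial D$ by $w=\gamma e^{\rmi\theta}$, so that
\[
\zeta=\gamma e^{\rmi\theta}+a_0+\sum_{n\ge1}a_n\gamma^{-n}e^{-\rmi n\theta},\qquad \varphi^{(i)}\,d\sigma=\sum_{m\ge1}\bigl(c^{(i)}_m e^{\rmi m\theta}+c^{(i)}_{-m}e^{-\rmi m\theta}\bigr)\,d\theta,
\]
where the $m=0$ mode is absent because $\varphi^{(i)}\in L^2_\Psi(\partial D)$. Termwise Fourier orthogonality on $[0,2\pi]$ discards the $a_0$ contribution and produces
\[
\int_{\partial D}\zeta\,\varphi^{(i)}\,d\sigma=\mathbb{E}^{(1,i)},\qquad \int_{\partial D}\zeta\,\overline{\varphi^{(i)}}\,d\sigma=\mathbb{E}^{(2,i)}.
\]

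For the second step, I observe that the three complex backgrounds in \eqref{three:sol} reduce, modulo the rigid-motion kernel (the $a_0$ constants lie in $\mathrm{span}\{\mathbf{R}_1,\mathbf{R}_2\}$ and are absorbed by the $b_j$ in \eqref{Lame_trans} without altering $\varphi^{(i)}$), to the pure linear loadings
\[
\mathbf{H}^{(1)}=\tfrac{\kappa-1}{2\mu}(x_1\e_1+x_2\e_2),\quad \mathbf{H}^{(2)}=\tfrac{1}{2\mu}(-x_1\e_1+x_2\e_2),\quad \mathbf{H}^{(3)}=\tfrac{1}{2\mu}(x_2\e_1+x_1\e_2).
\]
Letting $\varphi^j_k$ denote the complex density associated with the canonical loading $x_k\e_j$, linearity yields $\varphi^{(1)}=\tfrac{\kappa-1}{2\mu}(\varphi^1_1+\varphi^2_2)$, $\varphi^{(2)}=\tfrac{1}{2\mu}(-\varphi^1_1+\varphi^2_2)$, and $\varphi^{(3)}=\tfrac{1}{2\mu}(\varphi^1_2+\varphi^2_1)$. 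Splitting $\zeta=y_1+\rmi y_2$ into real and imaginary parts and invoking \eqref{def:EMT} then gives
\[
\int_{\partial D}\zeta\,\varphi^j_k\,d\sigma=(m^{kj}_{11}-m^{kj}_{22})+\rmi(m^{kj}_{12}+m^{kj}_{21}),
\]
\[
\int_{\partial D}\overline{\zeta}\,\varphi^j_k\,d\sigma=(m^{kj}_{11}+m^{kj}_{22})+\rmi(m^{kj}_{12}-m^{kj}_{21}).
\]

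Substituting these four identities into the three pairs $(\mathbb{E}^{(1,i)},\mathbb{E}^{(2,i)})$ and collapsing redundant indices via the symmetries \eqref{EMT:symmetry} expresses each $\mathbb{E}^{(k,i)}$ as an explicit linear combination of the six independent first-order EMTs $\{m^{11}_{11},m^{22}_{22},m^{11}_{22},m^{12}_{12},m^{12}_{11},m^{21}_{22}\}$. Two facts drop out en route: the imaginary parts of the three $\mathbb{E}^{(2,i)}$ collect only $m^{kj}_{12}-m^{kj}_{21}$ combinations, which vanish by $m^{ij}_{kl}=m^{ij}_{lk}$, giving $\Im(\mathbb{E}^{(2,i)})=0$; and matching the surviving real and imaginary components pairwise produces the three linear identities in Lemma~\ref{lem:E}. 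Inverting the resulting $6\times6$ real linear system then delivers \eqref{first:emt:formula}. The principal obstacle is entirely organisational: keeping the four-way index symmetries of \eqref{EMT:symmetry} straight and matching each scalar equation to the correct row of the linear system. No new analytical ingredient is needed beyond the series representation of $\varphi^{(i)}$ already encoded in \eqref{single:rigid:exterior}.
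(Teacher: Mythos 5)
Your proposal is correct and follows essentially the same route as the paper: both identify $\mathbb{E}^{(1,i)}=\int_{\partial D}z\varphi^{(i)}\,d\sigma$ and $\mathbb{E}^{(2,i)}=\int_{\partial D}z\overline{\varphi^{(i)}}\,d\sigma$ by Fourier orthogonality, relate the canonical loadings $x_k\e_j$ to $u^{(1)},u^{(2)},u^{(3)}$ modulo rigid motions, rewrite the first-order EMTs as real and imaginary parts of $\int z g$ and $\int\overline{z}g$, and invoke the symmetries \eqref{EMT:symmetry}. The only cosmetic difference is directional --- the paper expresses the EMT densities $g^j_\alpha$ in terms of $\varphi^{(i)}$ and reads off each $m^{ij}_{kl}$ directly (and gets $\Im(\mathbb{E}^{(2,i)})=0$ from orthogonality to $\mathbf{R}_3$ rather than from EMT symmetry), whereas you express the $\mathbb{E}^{(k,i)}$ in terms of the EMTs and invert the resulting linear system, which amounts to the same elementary linear algebra.
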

\begin{proof}
Let us recall $u^{(i)}(z)$ and $\varphi^{(i)}$ from \eqref{three:sol} and $\eqref{varphi_i}$.
For convenience, we denote a conformal map as $\Phi(w)=\sum_{k=-1}^{\infty}a_kw^{-k}$ with $a_{-1}=1$.
Then the following identity holds:
\begin{align*}
\begin{aligned}
\int_{\partial D} z\varphi^{(i)} d\sigma 
&= \int_{\partial D} \Phi(w) \sum_{m=1}^{\infty}\left(c_{-m}^{(i)}\varphi_{-m} + c_{m}^{(i)}\varphi_{m}\right)d\sigma
\\
& = \int_{0}^{2\pi} \sum_{k=-1}^{\infty}\left(\gamma^{-k}e^{-ik\theta}a_k\right) 
\sum_{m=1}^{\infty}\left(c_{m}^{(i)}\dfrac{e^{im\theta}}{h} + c_{-m}^{(i)}\dfrac{e^{-im\theta}}{h}\right)h \, d\theta
\\
&= 2\pi\left( \gamma c_{-1}^{(i)} + \sum_{m=1}^{\infty} a_m \dfrac{c_m^{(i)}}{\gamma^{m}}\right) = \mathbb{E}^{(1,i)}.
\end{aligned}
\end{align*}
We can apply the same calculation to $\mathbb{E}^{(2,i)}$ and hence we have
\begin{align}\label{E:reformula}
\mathbb{E}^{(1,i)} = \int_{\partial D}z\varphi^{(i)} d\sigma
\quad\text{and}\quad
\mathbb{E}^{(2,i)} = \int_{\partial D}z\overline{\varphi^{(i)}}d\sigma.
\end{align}
Note that $\varphi\in L^2_{\Psi}(\partial D)$ implies that
\begin{align}\label{E:Faber}
\int_{\partial D}z\varphi^{(i)}d\sigma = \int_{\partial D}(z+c)\varphi^{(i)}d\sigma,\qquad\text{for any }c\in\mathbb{C},
\end{align}
and
\begin{align}\label{ImE2}
\Im \left(\mathbb{E}^{(2,i)}\right)
= \Im \left(\int_{\partial D}\overline{z}\varphi^{(i)}d\sigma\right)
= -\int_{\partial D}\mathbf{R}_3\cdot \varphi^{(i)} d\sigma
=0,
\end{align}
where the rigid motion $\mathbf{R}_3=(x_2,-x_1)\in\Psi$.

We identify the vector $(x,y)\in\mathbb{R}^{2}$ with the complex variable $z=x+iy\in\mathbb{C}$.
Then we have
\begin{align}\label{vector:to:complex}
\begin{aligned}
&\bmat{x\\0} = \dfrac{\mu}{\kappa-1}u^{(1)} - \mu u^{(2)} + \text{Constant},\qquad
&\bmat{0\\y} = \dfrac{\mu}{\kappa-1}u^{(1)} + \mu u^{(2)} + \text{Constant},
\\
&\bmat{0\\x} =-\dfrac{\mu}{\kappa-1}\mathbf{R}_3 + \mu u^{(3)} + \text{Constant},\quad
&\bmat{y\\0} =\dfrac{\mu}{\kappa-1}\mathbf{R}_3 + \mu u^{(3)} + \text{Constant}.
\end{aligned}
\end{align}
By \eqref{E:Faber} and \eqref{ImE2}, and the linearity of the system \eqref{Lame_trans} we can reformulate $g^{j}_{\alpha}$ in \eqref{def:EMT} as 
\begin{align}\label{complex:first:density}
g^{1}_{1} &= \dfrac{\mu}{\kappa-1}\varphi^{(1)} - \mu\varphi^{(2)},\qquad
g^{2}_{2} = \dfrac{\mu}{\kappa-1}\varphi^{(1)} + \mu\varphi^{(2)},\qquad
g^{1}_{2} = g^{2}_{1} = \mu\varphi^{(3)}.
\end{align}
Here, we use the fact that the density functions corresponding rigid motions are identically zero (Lemma 6.14, \cite{Ammari:2004:RSI}).
From the complex-valued formulation, we can rewrite the first order EMTs as
\begin{align}\label{complex:EMT}
m^{ij}_{kl} = \int_{\partial D}y^{k}e_{l}\cdot g^{j}_{i}(y)d\sigma(y) 
= \Re \left(\int_{\partial D} \overline{y^{k} e_{l}} {g^{j}_{i}} d\sigma\right),
\end{align}
where $\overline{y^{k}e_j}$ is a complex conjugate of the complex-valued formula of the vector $y^{k}e_j$.
By using \eqref{ImE2} and \eqref{complex:EMT} we have
\begin{align}\label{EMT:decompose}
\begin{aligned}
m^{ij}_{11}
&=\Re\left( \int_{\partial D}  \dfrac{z+\overline{z}}{2}g^{j}_{i} d\sigma \right)
=\dfrac{1}{2}\Re\left( \int_{\partial D}z g_{i}^{j} + \overline{z}{g_{i}^{j}}d\sigma \right),
\\
m^{ij}_{12} 
&= \Im\left( \int_{\partial D}  \dfrac{z+\overline{z}}{2}g^{j}_{i} d\sigma\right)
=\dfrac{1}{2}\Im\left( \int_{\partial D}z g_{i}^{j}d\sigma\right),
\\
m^{ij}_{21}
&= \Re\left( \int_{\partial D}\dfrac{z-\overline{z}}{2i}g^{j}_{i}d\sigma \right)
= \dfrac{1}{2}\Im\left( \int_{\partial D} z g_{i}^{j} d\sigma\right),
\\
m^{ij}_{22} 
&= \Im\left( \int_{\partial D}  \dfrac{z-\overline{z}}{2i}g^{j}_{i} d\sigma\right) 
= \dfrac{1}{2}\Re\left( \int_{\partial D}-z g_{i}^{j} + \overline{z}{g_{i}^{j}} d\sigma \right).
\end{aligned}
\end{align}
By plugging \eqref{complex:first:density} into \eqref{EMT:decompose}, and apply \eqref{E:reformula} and \eqref{ImE2}, we have
\begin{align*}
m^{11}_{11} 
&= \dfrac{\mu}{2(\kappa-1)}\left[\Re\left( \mathbb{E}^{(1,1)}\right) + \mathbb{E}^{(2,1)} \right]
- \dfrac{\mu}{2}\left[\Re\left( \mathbb{E}^{(1,2)}\right) + \mathbb{E}^{(2,2)} \right],
\\
m^{22}_{22}
&=\dfrac{\mu}{2(\kappa-1)}\left[ - \Re\left(\mathbb{E}^{(1,1)} \right)  + \mathbb{E}^{(2,1)}\right]
+ \dfrac{\mu}{2}\left[ - \Re\left( \mathbb{E}^{(1,2)}\right) + \mathbb{E}^{(2,2)} \right],
\\
m^{12}_{12} 
&= m^{21}_{12}=m^{12}_{21}=m^{21}_{21} = \dfrac{\mu}{2}\Im\left(\mathbb{E}^{(1,3)}\right),
\\
m^{11}_{22}
&=\dfrac{\mu}{2(\kappa-1)}\left[ - \Re\left(\mathbb{E}^{(1,1)}\right)+\mathbb{E}^{(2,1)} \right]
+ \dfrac{\mu}{2}\left[\Re\left(\mathbb{E}^{(1,2)}\right) - \mathbb{E}^{(2,2)} \right],
\\
m^{22}_{11}
&=\dfrac{\mu}{2(\kappa-1)}\left[\Re\left(\mathbb{E}^{(1,1)}\right) + \mathbb{E}^{(2,1)} \right]
+ \dfrac{\mu}{2}\left[\Re\left(\mathbb{E}^{(1,2)}\right) + \mathbb{E}^{(2,2)} \right],
\\
m^{11}_{12} &= m^{11}_{21} = \dfrac{\mu}{2(\kappa-1)}\Im\left( \mathbb{E}^{(1,1)} \right) - \dfrac{\mu}{2}\Im\left( \mathbb{E}^{(1,2)}\right),\\
m^{12}_{11} &= m^{21}_{11} = \dfrac{\mu}{2}\left[\Re\left(\mathbb{E}^{(1,3)}\right) + \mathbb{E}^{(2,3)}\right],
\\
m^{22}_{12} &=m^{22}_{21} = \dfrac{\mu}{2(\kappa-1)}\Im\left( \mathbb{E}^{(1,1)} \right) + \dfrac{\mu}{2}\Im\left( \mathbb{E}^{(1,2)}\right),\\
m^{12}_{22} &=m^{21}_{22} = \dfrac{\mu}{2}\left[-\Re\left(\mathbb{E}^{(1,3)}\right) + \mathbb{E}^{(2,3)}\right].
\end{align*}

In particular, the symmetry of the first order EMTs \eqref{EMT:symmetry} implies that
\begin{align*}
\Re\left(\mathbb{E}^{(1,1)}\right) = -(\kappa-1)\mathbb{E}^{(2,2)},\quad
\Re\left(\mathbb{E}^{(1,3)}\right) = -\Im\left(\mathbb{E}^{(1,2)}\right),\quad
\Im\left(\mathbb{E}^{(1,1)}\right) = (\kappa-1)\mathbb{E}^{(2,3)}.
\end{align*}
Hence, we complete the proof of Lemma \ref{lem:E} and Theorem \ref{theorem:firstEMT}
\end{proof}

\begin{lemma}
Let $\varphi$ be given by \eqref{density} with any given background solution.
Define $\mathbb{E}^{(1)}$ and $\mathbb{E}^{(2)}$ by
\begin{align*}
\mathbb{E}^{(1)} := 2\pi\left(\gamma c_{-1} + \sum_{m=1}^{\infty}a_m\dfrac{c_m}{\gamma^{m}}\right),\qquad
\mathbb{E}^{(2)} := 2\pi\left(\gamma \overline{c_{1}} + \sum_{m=1}^{\infty}a_m\dfrac{\overline{c_{-m}}}{\gamma^{m}}\right).
\end{align*}
Then, the single layer potential corresponding $\varphi$ can be represented with $\mathbb{E}^{(1)}$ and $\mathbb{E}^{(2)}$ by
\begin{align*}
2\mathcal{S}[\varphi](z) = 
-\dfrac{\alpha}{2\pi}\mathbb{E}^{(1)}w^{-1} 
- \dfrac{\alpha}{2\pi}\overline{\mathbb{E}^{(2)}w^{-1}}
+\dfrac{\beta}{2\pi}\overline{\mathbb{E}^{(1)}}w\overline{w^{-2}}
+O(|w|^{-2}).
\end{align*}

\end{lemma}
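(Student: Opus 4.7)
The plan is to extract the leading $O(|w|^{-1})$ asymptotics of $2\mathcal{S}_D[\varphi](z)$ by taking the Mattei–Choi–Lim series representation \eqref{single:rigid:exterior} and expanding each of $v_1,v_2,v_3$ separately as $|w|\to\infty$, then showing that only three monomial types survive at the advertised order: $w^{-1}$, $\overline{w^{-1}}$, and $w\,\overline{w^{-2}}$. The two basic expansions I would use throughout are the Faber expansion $F_m(\psi(w))=w^m+\sum_{n\geq 1}c_{m,n}w^{-n}$ and the inversion $G_k(w)=w^{k-1}/\psi'(w)=w^{k-1}\bigl(1+O(|w|^{-2})\bigr)$.

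First I would handle $v_2$. Because $\tilde{F}_{-m}\equiv 0$ and $\tilde{F}_m-G_m=O(w^{-2})$, the series $v_2$ collapses to an $O(w^{-2})$ quantity, with leading coefficient assembled from $-c_{-1}G_{-1}(w)$ together with the Grunsky tail $\sum_{m\geq 1}c_m(\tilde{F}_m-G_m)$. Since $\psi(w)=w+a_0+O(|w|^{-1})$, only the $w$-term in $\psi$ survives multiplication by $\overline{v_2(w)}=O(|w|^{-2})$ at the $O(|w|^{-1})$ level, so $\beta\psi(w)\overline{v_2(w)}=\beta w\,\overline{v_2(w)}+O(|w|^{-2})$ produces the $w\bar{w}^{-2}$ piece. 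Next I would expand $v_1$ by splitting it into its $w$-holomorphic and $\bar{w}$-antiholomorphic parts and reading off the coefficients of $w^{-1}$ and $\bar{w}^{-1}$, which receive contributions from both the bare $w^{-m},\bar{w}^{-m}$ terms (only $m=1$ surviving) and the Grunsky tails of the $F_m$-sums. For $v_3$, the double sum collapses: only the index pairs with $k-m=0$ in the $c_{-m}$ block and $k+m=0$ (forced to $m=1$, $k=-1$) in the $c_m$ block produce the singular $\tilde{F}_0-G_0=-w^{-1}/\psi'(w)$; every other $(k,m)$ contributes $O(|w|^{-2})$.

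The decisive step is to identify the assembled coefficients with $\mathbb{E}^{(1)}$ and $\mathbb{E}^{(2)}$. I would use the two integral representations $\mathbb{E}^{(1)}=\int_{\partial D}z\,\varphi\,d\sigma$ and $\mathbb{E}^{(2)}=\int_{\partial D}z\,\overline{\varphi}\,d\sigma$, established in the proof of Theorem~\ref{theorem:firstEMT} by boundary substitution $w=\gamma e^{i\theta}$ and Fourier orthogonality. The sums in $c_{m,n}$ and $\overline{a_k}$ emerging from the Mattei expansions of $v_1,v_3$ match these integrals term by term once $z=\psi(w)$ and $\bar{z}=\overline{\psi(w)}$ are written out on $\partial D$; reality of $\mathbb{E}^{(2)}$, which follows from $\varphi\in L^2_\Psi(\partial D)$ exactly as in the derivation of \eqref{ImE2}, lets the conjugate in the $\overline{\mathbb{E}^{(2)}w^{-1}}$ term be absorbed cleanly.

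The main obstacle I anticipate is the bookkeeping of cross-cancellations contributing to the $\bar{w}^{-1}$ coefficient: both the antiholomorphic part of $v_1$ and $-\beta\overline{v_3}$ naively produce pieces at that order, and they must combine so that only $-\alpha\overline{\mathbb{E}^{(2)}}/(2\pi)$ remains, without a stray $\mathbb{E}^{(2)}$-term. Handling this requires tracking the Grunsky sums $\sum c_{-m}\overline{c_{m,1}}/m$ and $\sum c_{-m}\overline{a_m}$ and recognizing their combination as exactly the boundary-integral expansion of $\int z\overline{\varphi}\,d\sigma$. Verifying uniformity of the $O(|w|^{-2})$ remainder across the infinite sums, via the decay of $c_{\pm m}$ inherited from $\varphi\in L^2(\partial D)$, is routine but should be stated to close the argument.
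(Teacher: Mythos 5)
Your overall route is the same as the paper's: its proof consists precisely of extracting the first-order terms ($w^{-1}$, $\overline{w^{-1}}$, $w\overline{w^{-2}}$) from the Mattei--Lim expansion \eqref{single:rigid:exterior} and regrouping them into the definitions of $\mathbb{E}^{(1)}$ and $\mathbb{E}^{(2)}$. Your detour through the integral representations $\mathbb{E}^{(1)}=\int_{\partial D}z\varphi\,d\sigma$ and $\mathbb{E}^{(2)}=\int_{\partial D}z\overline{\varphi}\,d\sigma$ is harmless but unnecessary: the defining expressions $\gamma c_{\mp1}+\sum_m a_m c_{\pm m}\gamma^{-m}$ are exactly the coefficient combinations that fall out of the expansion once you use the Grunsky symmetry $c_{m,1}/m=a_m$ together with the surviving $m=1$ bare terms, so direct matching suffices; likewise the reality of $\mathbb{E}^{(2)}$ is not needed, since the statement already carries the explicit conjugate $\overline{\mathbb{E}^{(2)}w^{-1}}$.

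The one place your outline does not close is exactly the point you flag as the main obstacle: the $\overline{w^{-1}}$ coefficient. Tracking the terms, $-\alpha v_1$ \emph{alone} already yields the full claimed coefficient $-\frac{\alpha}{2\pi}\overline{\mathbb{E}^{(2)}}$ (the bare $c_1\overline{w^{-1}}$ term plus the tails $c_{-m}\overline{c_{m,1}}/m=c_{-m}\overline{a_m}$), while the diagonal terms of $v_3$ that you correctly isolate (the pairs $k=m$ in the $c_{-m}$ block and $(k,m)=(-1,1)$ in the $c_m$ block, each contributing $\tilde{F}_0-G_0=-w^{-1}/\psi'(w)$) produce an \emph{additional} $\beta$-weighted multiple of $\mathbb{E}^{(2)}\overline{w^{-1}}$. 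The mechanism you propose for disposing of it --- recognizing the assembled sums as the boundary-integral expansion of $\int z\overline{\varphi}\,d\sigma$ --- does not make that extra piece vanish, and reality of $\mathbb{E}^{(2)}$ only turns the total into $(\beta-\alpha)\mathbb{E}^{(2)}$ rather than the stated $-\alpha\mathbb{E}^{(2)}$. The paper's own (very terse) proof simply records no first-order contribution from $v_3$; to complete your argument you must either verify, against the precise normalization of the expansion in \cite{Mattei:2021:EAS}, that the diagonal $v_3$ terms genuinely contribute nothing at order $\overline{w}^{-1}$, or carry that term explicitly and reconcile it with the stated coefficient. As written, the asserted cancellation is the missing step.
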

\begin{proof}
Mattei--Lim present the series expansion of the exterior single layer potential for arbitrary shape of rigid inclusion (Theorem 3.2, \cite{Mattei:2021:EAS}).
This series expansion formula already contains information on the shape of inclusion and its material properties, hence it is not changed even for general case.
We extract the first order terms from \eqref{single:rigid:exterior}, then we have
\begin{align*}
2\mathcal{S}[\varphi](z)
&=-\alpha\sum_{m=1}^{\infty} a_m \dfrac{c_m}{\gamma^{m}} w^{-1} 
-\alpha \sum_{m=1}^{\infty} \overline{a_m} \dfrac{c_{-m}}{\gamma^{m}}\overline{w^{-1}}
-\alpha\gamma c_{-1}w^{-1} - \alpha \gamma c_1 \overline{w^{-1}}\\
&\quad
+\beta \overline{c_{-1}}\gamma w\overline{w^{-2}}
+\beta \sum_{m=1}^{\infty}\overline{a_m}\dfrac{\overline{c_m}}{\gamma^{m}} w\overline{w^{-2}}
+O(|w|^{-2})
\\
&=
-\alpha\left( \gamma c_{-1} + \sum_{m=1}^{\infty} a_m\dfrac{c_m}{\gamma^{m}} \right) w^{-1}
-\alpha\left(\gamma c_1 + \sum_{m=1}^{\infty} \overline{a_m}\dfrac{c_{-m}}{\gamma^{m}}\right)\overline{w^{-1}}\\
&\quad
+\beta\left( \gamma\overline{c_{-1}} + \sum_{m=1}^{\infty}\overline{a_m}\dfrac{\overline{c_m}}{\gamma^{m}}\right)w\overline{w^{-2}}
+O(|w|^{-2}).
\end{align*}
\end{proof}

\section{EMTs with rigid inclusion}
In this section, we give a scheme to attain the quantity of EMTs and present exact values for three examples.
We use the result of Mattei--Lim \cite{Mattei:2021:EAS} to get the coefficients of the density function.

\subsection{Solvability of the series solution method in \cite{Mattei:2021:EAS} for the uniform loading}
Let a conformal map be $\Phi(w)=w+\sum_{m=1}^{N}a_{m}w^{-m}$.
And $d_{m,j},\ j=0,\dots,m-1$ is complex coefficient for the derivative of Faber polynomials, that is,
\begin{align*}
F_m'(z) = \sum_{j=0}^{m-1}d_{m,j}F_j(z).
\end{align*}
We consider the background solution
\begin{align}\label{arb:back:first}
2\mu u(z) = (\kappa A-\overline{A})F_1(z) - a_0\overline{A} - \overline{B F_1(z)},\qquad A,B\in\mathbb{C},
\end{align}
and denote its corresponding density function $\varphi$ as \eqref{density}.
By the result of Mattei--Lim \cite{Mattei:2021:EAS}, the single layer potential is given by
\begin{align*}
2S[\varphi](z)&= -\alpha \dfrac{c_{1}}{\gamma} F_1 
+ \beta\dfrac{\overline{c_1}}{\gamma} F_1
-\alpha \sum_{j=1}^{N} \dfrac{c_{-j}}{j\gamma^{j}}\overline{F_j}\\
&\quad 
-\beta\sum_{j=0}^{N-2}\sum_{k=j+2}^{N}\sum_{m=1}^{k-j-1} \overline{c_{-m}}\dfrac{a_k}{(k-m)\gamma^{2k-m}}\overline{d_{k-m,j}}\overline{F_j(z)}
\\&\quad
-\beta\overline{c_1}\sum_{j=0}^{N}\sum_{k=j}^{N}\dfrac{a_k}{(k+1)\gamma^{2k+1}}\overline{d_{k+1,j}}\overline{F_j(z)}.
\end{align*}

\eqref{Lame_trans} and \eqref{c_j:cond} imply $2\mu(u(z)+S[\varphi](z))=b_1+ib_2+ b_3iz$ on the boundary with $b_1,b_2,b_3\in\mathbb{R}$.
Thus, all Faber polynomial terms should be zero without first order, $F_1(z)$.
By considering $N$-th order Faber polynomial, we have
\begin{align*}
c_{-N} = -\dfrac{\beta}{\alpha}\dfrac{a_{N}N\overline{c_1}}{(N+1)\gamma^{N+1}}\overline{d_{N+1,N}}.
\end{align*}
And $(N-1)$th order Faber polynomial term determines the coefficient $c_{-(N-1)}$.
Inductively, we can ensure that $\{c_{-m}\}$ are represented by $c_1$.
It is easy to check that
\begin{align*}
\Re(c_1) = \dfrac{\gamma}{\beta\mu}\Re(A).
\end{align*}
Now, $\Im\left(\int_{\partial \Omega}z\overline{\varphi(z)}d\sigma(z)\right)=0$ determines the imaginary part of $c_1$.
As $c_1$ is determined by $A$ and $B$ in \eqref{arb:back:first}, we can obtain the coefficients of density function exactly.

\subsection{Inclusions with conformal mappings having upto negative third order}
We provide the explicit formulas for the EMTs for the inclusions with conformal mappings having upto negative third order.
\begin{example}
Let $D$ be a simply connected, planar, and bounded Lipschitz domain.
Assume that the inclusion is a rigid and the conformal map is given by $\Phi(w)=w+a_1/w$.
Then, the first order EMTs are as follows:
\begin{align*}
m^{11}_{11} 
&= \dfrac{2\pi}{\alpha}\Re(a_1) 
+ \dfrac{\pi\gamma^{2}}{\alpha} 
+ \dfrac{\pi}{\kappa-1}\dfrac{\alpha\gamma^{4}-\beta|a_1|^2}{\alpha\beta\gamma^2}  
- \dfrac{\pi}{\alpha}\dfrac{(\alpha + \beta )\gamma^{2}\Im(a_1)^2}{\alpha \gamma^{4}+\beta|a_1|^2},\\
m^{22}_{22}
&= -\dfrac{2\pi}{\alpha}\Re(a_1) 
+ \dfrac{\pi\gamma^{2}}{\alpha} 
+ \dfrac{\pi}{\kappa-1}\dfrac{\alpha\gamma^{4}-\beta|a_1|^2}{\alpha\beta\gamma^2}  
- \dfrac{\pi}{\alpha}\dfrac{(\alpha + \beta )\gamma^{2}\Im(a_1)^2}{\alpha \gamma^{4}+\beta|a_1|^2},\\
m^{11}_{22}
&= \dfrac{\pi}{\kappa-1}\dfrac{\alpha \gamma^{4}-\beta |a_1|^2}{\alpha \beta \gamma^{2}}
- \dfrac{\pi \gamma^{2}}{\alpha }
+ \dfrac{\pi}{\alpha }\dfrac{(\alpha + \beta )\gamma^{2}\Im(a_1)^2}{\alpha \gamma^{4} + \beta |a_1|^2},\\
m^{12}_{12}
&= \dfrac{\pi\gamma^{2}}{\alpha }
- \dfrac{\pi}{\alpha }\dfrac{(\alpha + \beta )\gamma^{2}\Re(a_1)^2}{\alpha \gamma^{4} + \beta |a_1|^2},\\
m^{12}_{11}
&= \dfrac{\pi}{\alpha }\Im(a_1)\left[ 1 + \dfrac{(\alpha + \beta )\gamma^{2}\Re(a_1)}{\alpha \gamma^{4}+\beta|a_1|^2}\right],\\
m^{21}_{22}
&= \dfrac{\pi}{\alpha }\Im(a_1)\left[ 1 - \dfrac{(\alpha + \beta )\gamma^{2}\Re(a_1)}{\alpha \gamma^{4}+\beta|a_1|^2}\right].
\end{align*}
\end{example}

\begin{example}
Let $D$ be a simply connected, planar, and bounded Lipschitz domain.
Assume that the inclusion is a rigid and $\Phi(w)=w+a_1/w+a_2/w^2$.
Then, the first order EMTs are as follows:\begin{align*}
m^{11}_{11} 
&= \dfrac{2\pi}{\alpha}\Re(a_1) 
+ \dfrac{\pi\gamma^{2}}{\alpha} 
+ \dfrac{\pi}{\kappa-1}\dfrac{\alpha\gamma^{6}-\beta\gamma^{2}|a_1|^2-2\beta|a_2|^2}{\alpha\beta\gamma^4}  
- \dfrac{\pi}{\alpha}\dfrac{(\alpha + \beta )\gamma^{4}\Im(a_1)^2}{\alpha\gamma^{6}+\beta\gamma^{2}|a_1|^2+2\beta|a_2|^2},\\
m^{22}_{22}
&= -\dfrac{2\pi}{\alpha}\Re(a_1) 
+ \dfrac{\pi\gamma^{2}}{\alpha} 
+ \dfrac{\pi}{\kappa-1}\dfrac{\alpha\gamma^{6}-\beta\gamma^{2}|a_1|^2-2\beta|a_2|^2}{\alpha\beta\gamma^4}  
- \dfrac{\pi}{\alpha}\dfrac{(\alpha + \beta )\gamma^{4}\Im(a_1)^2}{\alpha\gamma^{6}+\beta\gamma^{2}|a_1|^2+2\beta|a_2|^2},\\
m^{11}_{22}
&= \dfrac{\pi}{\kappa-1}\dfrac{\alpha\gamma^{6}-\beta\gamma^{2}|a_1|^2-2\beta|a_2|^2}{\alpha \beta \gamma^{4}}
- \dfrac{\pi \gamma^{2}}{\alpha }
+ \dfrac{\pi}{\alpha }\dfrac{(\alpha + \beta )\gamma^{4}\Im(a_1)^2}{\alpha\gamma^{6}+\beta\gamma^{2}|a_1|^2+2\beta|a_2|^2},\\
m^{12}_{12}
&= \dfrac{\pi\gamma^{2}}{\alpha }
- \dfrac{\pi}{\alpha }\dfrac{(\alpha + \beta )\gamma^{4}\Re(a_1)^2}{\alpha\gamma^{6}+\beta\gamma^{2}|a_1|^2+2\beta|a_2|^2},\\
m^{12}_{11}
&= \dfrac{\pi}{\alpha }\Im(a_1)\left[ 1 + \dfrac{(\alpha + \beta )\gamma^{4}\Re(a_1)}{\alpha\gamma^{6}+\beta\gamma^{2}|a_1|^2+2\beta|a_2|^2}\right],\\
m^{21}_{22}
&= \dfrac{\pi}{\alpha }\Im(a_1)\left[ 1 - \dfrac{(\alpha + \beta )\gamma^{4}\Re(a_1)}{\alpha\gamma^{6}+\beta\gamma^{2}|a_1|^2+2\beta|a_2|^2}\right].
\end{align*}
\end{example}

\begin{example}
Let $D$ be a simply connected, planar, and bounded Lipschitz domain.
Assume that the inclusion is a rigid and $\Phi(w)=w+a_1/w+a_2/w^2+a_3/w^3$.
Denoting
\begin{align*}
&\mathbb{B}(B) 
=\dfrac{\alpha\gamma^{4}}{\alpha^2\gamma^4\mu-\beta^2\mu|a_3|^2}
 \left[
 -\Im\left(Ba_1-\dfrac{\beta}{\alpha}\overline{B}a_1\overline{a_3}\right)
 +\dfrac{\Im(a_3)}{\gamma^6} + \dfrac{\beta}{\alpha\gamma^2}\Im(a_1^2\overline{a_3}) + \dfrac{\beta}{\alpha\gamma^6}\Im(a_1^2)|a_3|^2
\right]
\\
 &\times\left[
  \gamma + \dfrac{2\beta |a_2|^2}{\alpha \gamma^5} + \dfrac{3\beta|a_3|^2}{\alpha \gamma^7} + \dfrac{\alpha\gamma^4}{\alpha^2\gamma^4\mu-\beta^2\mu|a_3|^2} 
 \left( \dfrac{\beta \mu |a_1|^2}{\gamma^{3}} \left( 1+ \dfrac{\Re(a_3)}{\gamma^4}\right)
 +\dfrac{\beta^2\mu}{\alpha\gamma^3}\left( \Re(a_1^2\overline{a_3}) + \Re(a_1^2)|a_3|^2\right)\right)
 \right]^{-1}\\
\end{align*}
and
\begin{align*}
\mathbb{C}(B) &=
\dfrac{\alpha \gamma^5}{\alpha^2\gamma^4\mu-\beta^2\mu|a_3|^2}
\left[
-\overline{B} - \dfrac{\beta\mu}{\gamma^3}\overline{c_1}a_1 - \dfrac{\beta\mu}{\gamma^7}\overline{c_1}a_1a_3
 + \dfrac{\beta}{\alpha}a_3 \left( B + \dfrac{\beta\mu}{\gamma^3}c_1\overline{a_1} + \dfrac{\beta\mu}{\gamma^7}c_1\overline{a_1}\overline{a_3}\right)
\right],
\end{align*}
then $\mathbb{E}^{(s,t)}$ satisfy
\begin{align*}
\mathbb{E}^{(1,1)} 
&=  2\pi\left( \dfrac{a_1}{\beta\mu} + i \dfrac{a_1}{\gamma}\mathbb{B}(0)+\gamma\mathbb{C}(0)\right),
\\
\mathbb{E}^{(1,2)}
&= 2\pi\left( i\dfrac{a_1}{\gamma}\mathbb{B}(1) + \gamma\mathbb{C}(1)\right),\\
\mathbb{E}^{(1,3)}
&= 2\pi\left( i\dfrac{a_1}{\gamma}\mathbb{B}(i) + \gamma\mathbb{C}(i)\right),\\
\mathbb{E}^{(2,1)}
&= 2\pi \left[
 \left( \dfrac{\gamma^2}{\beta\mu} - \dfrac{2\beta}{\alpha\gamma^5}|a_2|^2 - \dfrac{3\beta}{\alpha\gamma^7}|a_3|^2\right)
- \left( \gamma + \dfrac{2\beta}{\alpha\gamma^5}|a_2|^2 + \dfrac{3\beta}{\alpha\gamma^7}|a_3|^2\right) i\mathbb{B}(0)
+ \dfrac{a_1}{\gamma}\overline{\mathbb{C}(0)}
\right],\\
\mathbb{E}^{(2,2)}
&= 2\pi \left[
- \left( \gamma + \dfrac{2\beta}{\alpha\gamma^5}|a_2|^2 + \dfrac{3\beta}{\alpha\gamma^7}|a_3|^2\right) i\mathbb{B}(1)
+ \dfrac{a_1}{\gamma}\overline{\mathbb{C}(1)}
\right],\\
\mathbb{E}^{(2,3)}
&= 2\pi \left[
- \left( \gamma + \dfrac{2\beta}{\alpha\gamma^5}|a_2|^2 + \dfrac{3\beta}{\alpha\gamma^7}|a_3|^2\right) i\mathbb{B}(i)
+ \dfrac{a_1}{\gamma}\overline{\mathbb{C}(i)}
\right],
\end{align*}
and hence, we can attain the first order EMTs by plugging these formulas into \eqref{first:emt:formula}.
\end{example}

\ifx \bblindex \undefined \def \bblindex #1{} \fi\ifx \bbljournal \undefined
  \def \bbljournal #1{{\em #1}\index{#1@{\em #1}}} \fi\ifx \bblnumber
  \undefined \def \bblnumber #1{{\bf #1}} \fi\ifx \bblvolume \undefined \def
  \bblvolume #1{{\bf #1}} \fi\ifx \noopsort \undefined \def \noopsort #1{} \fi

\end{document}